\documentclass[11pt]{article}

\usepackage[T1]{fontenc}
\usepackage[utf8]{inputenc}
\usepackage{amsmath,amssymb,amsfonts,amsthm,mathtools}
\usepackage{graphicx}
\usepackage{subcaption}
\usepackage{float}
\usepackage[dvipsnames]{xcolor}
\usepackage[hidelinks]{hyperref}
\usepackage[a4paper, left=2cm, right=2cm, top=2.5cm, bottom=2.5cm]{geometry}

\newcommand{\gscat}{\ensuremath{\mathrm{gscat}}}

\newcommand{\join}{\mathbin{*}}
\newcommand{\arb}{\ensuremath{\mathrm{arb}}}

\newtheorem{lemma}{Lemma}
\newtheorem{definition}[lemma]{Definition}

\newtheorem{theorem}[lemma]{Theorem}

\newtheorem{corollary}[lemma]{Corollary}
\newtheorem{example}[lemma]{Example}

\title{Arboricity and Simplicial Geometric Category of Wedges and Joins of Graphs}

\author{
\textsc{Nursultan Kuanyshov}\textsuperscript{1}
and
\textsc{Islam Yeginbay}\textsuperscript{2}
}
\date{}

\begin{document}
\maketitle

\begin{abstract}
We investigate the behavior of arboricity under two fundamental graph operations, namely wedges and joins, proving an exact formula for wedges and establishing general upper and lower bounds for joins. Using the characterization of the simplicial geometric category of connected graphs in terms of arboricity, we derive a wedge formula for simplicial geometric category and obtain corresponding estimates for graph joins. Finally, we illustrate these results through explicit computations for several classes of graphs by constructing forest decompositions and the associated covers by strongly collapsible subcomplexes.
\end{abstract}

\noindent\textbf{Keywords:} arboricity; simplicial Lusternik--Schnirelmann category; simplicial geometric category; strong collapse; graph join; wedge; simplicial complexes.

\section{Introduction}

The Lusternik--Schnirelmann category~\cite{LS,CLOT} and its geometric version introduced by Fox~\cite{Fox1941} are classical numerical invariants measuring the topological complexity of spaces. In recent years, several combinatorial analogues have been developed for finite topological spaces and simplicial complexes~\cite{FMV2015,FTMVMV2}. Among them, the simplicial geometric category introduced by Fernández-Ternero, Macías-Virgós and Vilches~\cite{FMV2015} is defined using the notion of strong collapse introduced by Barmak and Minian~\cite{BarmakMinian2012} and provides a discrete counterpart of Fox's geometric category.

A remarkable feature of the one-dimensional case is that simplicial geometric category admits a purely graph-theoretic description. Fernández-Ternero, Macías-Virgós, Minuz and Vilches proved that the simplicial geometric category of every connected graph coincides with its arboricity~\cite{FTMVMV2}. Consequently, questions concerning simplicial geometric category may be translated into questions on arboricity. This correspondence provides an effective method for studying simplicial geometric category through classical techniques from graph theory, particularly the theory of decompositions of graphs into forests~\cite{NashWilliams1964,BondyMurty}.

The purpose of this paper is to exploit this connection. Rather than studying simplicial geometric category directly, we first establish new results on arboricity under natural graph operations and then transfer these results to simplicial geometric category.

Our first main theorem determines the arboricity of wedges:$$\arb(K\vee L)=\max\{\arb(K),\arb(L)\}.$$
Using the equality between arboricity and simplicial geometric category for connected graphs~\cite{FTMVMV2}, we immediately obtain the corresponding wedge formula for simplicial geometric category. While related results are known for simplicial Lusternik--Schnirelmann category~\cite{FMV2015,FTMVMV2}, the geometric version appears not to have been previously established.

Our second main result concerns graph joins. We derive general lower and upper bounds for the arboricity of the join of two graphs. These estimates depend on the arboricities of the factors together with the contribution of the complete bipartite graph introduced by the join. As an application of the correspondence between arboricity and simplicial geometric category~\cite{FTMVMV2}, we obtain corresponding estimates for the simplicial geometric category of graph joins.

Finally, we demonstrate the effectiveness of our approach through explicit computations for several important graph families, including bouquets of cycles, cactus graphs, complete bipartite graphs, fan graphs, wheel graphs, and joins of paths with independent sets. In each case, the arboricity is realized by explicit decompositions into forests, which translate naturally into explicit covers by strongly collapsible subcomplexes.

\section{Preliminaries}

This section fixes the conventions and recalls the results used throughout the
paper. All simplicial complexes and graphs are finite, and all graphs are
simple. A graph is regarded as a one-dimensional simplicial complex whenever
$\gscat$ is applied to it.

\subsection{Simplicial Geometric Category}

\begin{definition}
An \emph{abstract simplicial complex} $K$ on a vertex set $V(K)$ is a collection of finite nonempty subsets of $V(K)$ such that, whenever $\sigma\in K$ and $\varnothing\neq\tau\subseteq\sigma$, we have $\tau\in K$. The elements of $K$ are called \emph{simplices}, and the dimension of a simplex $\sigma$ is defined by $$ \dim \sigma = |\sigma|-1. $$ A simplex is \emph{maximal} if it is not properly contained in any other simplex of $K$. The dimension of $K$ is $$\dim K=\max\{\dim\sigma:\sigma\in K\}.$$
\end{definition}

We first recall the notion of strong collapse introduced by Barmak and
Minian~\cite{BarmakMinian2012}.

\begin{definition}
Let $K$ be a simplicial complex and let $u,v\in V(K)$ with $u\neq v$.
The vertex $v$ is said to be \emph{dominated} by $u$ if every maximal
simplex of $K$ containing $v$ also contains $u$. Equivalently, $$\operatorname{lk}_K(v)\subseteq \operatorname{st}_K(u).$$
In this case, the deletion of $v$, together with all simplices containing
it, is called an \emph{elementary strong collapse}. A finite sequence of
elementary strong collapses is called a \emph{strong collapse}. The complex
$K$ is \emph{strongly collapsible} if it strongly collapses to a single
vertex.
\end{definition}

\begin{definition}~\cite{FMV2015}
The \emph{simplicial geometric category} of a finite
simplicial complex $K$, denoted by $\gscat(K)$, is the least integer $m\geq0$ such that $$K=C_0\cup C_1\cup\cdots\cup C_m,$$
where each $C_i$ is strongly collapsible. Notice that strongly collapsible subcomplexes must be connected.
\end{definition}

\subsection{Graphs}

Following Bondy and Murty~\cite{BondyMurty}, we recall the standard graph-theoretic notation used throughout the paper. A finite simple graph is a pair $G=(V(G),E(G))$, where $V(G)$ is a finite set of vertices and $E(G)\subseteq\bigl\{\{u,v\}:u,v\in V(G),\ u\neq v\bigr\}$ is its set of edges. As usual, we write $uv$ for the unordered pair $\{u,v\}$. Two vertices $u,v\in V(G)$ are adjacent if $uv\in E(G)$. For a subset $S\subseteq V(G)$, the induced subgraph $G[S]$ has vertex set $S$ and edge set $E(G[S])=\{uv\in E(G):u,v\in S\}$. We use the customary notation for the following standard families of graphs. The complete graph $K_n$ is the graph on $n$ vertices in which every pair of distinct vertices is adjacent, whereas the edgeless graph $\overline{K}_n$ has $n$ vertices and no edges. The complete bipartite graph $K_{m,n}$ has a vertex partition $V(K_{m,n})=X\sqcup Y$, where $|X|=m$ and $|Y|=n$, and its edges are precisely all pairs $xy$ with $x\in X$ and $y\in Y$. The path graph $P_n$ has vertices $v_1,\ldots,v_n$ and edge set $E(P_n)=\{v_iv_{i+1}:1\leq i\leq n-1\}$. For $n\geq3$, the cycle graph $C_n$ is obtained from $P_n$ by adding the edge $v_nv_1$.

\begin{definition}
A graph is a \emph{forest} if it contains no cycles. Equivalently, each connected component of a forest is a tree.
\end{definition}

\begin{definition}
A cactus graph is a connected graph in which every edge belongs to
at most one cycle. Equivalently, any two distinct cycles have at most one vertex in common.
\end{definition}

\begin{definition}[Induced subgraph {\cite{BondyMurty}}]
Let $G=(V,E)$ be a graph and let $S\subseteq V$. The subgraph of $G$ induced by $S$, denoted by $G[S]$, is the graph with vertex set $S$ and edge set $$E(G[S])=\{uv\in E\mid u,v\in S\}.$$ In other words, $G[S]$ contains all edges of $G$ whose endpoints both belong to $S$.
\end{definition}

\begin{definition}
Let $G$ be a graph. If $E(G)\neq\varnothing$, let $k$ be the minimum number of forests whose edge sets partition $E(G)$. The
(reduced) \emph{arboricity} of $G$ is defined by $\arb(G)=k-1.$ 
\end{definition}

By our convention, $\arb(G)$ is one less than the classical arboricity of $G$ \cite{NashWilliams1964}. If $E(G)=\varnothing$, we set $\arb(G)=0$. 

\begin{theorem}[Nash--Williams~\cite{NashWilliams1964}]
\label{thm:nw} For every graph $G$ containing at least one edge, $$\arb(G)=\max\left\{0, \,\max_{\substack{H\subseteq G\\|V(H)|\geq2}}\left\lceil\frac{|E(H)|}{|V(H)|-1}\right\rceil-1\right\}.$$ 
\end{theorem}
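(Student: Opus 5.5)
The statement is the classical Nash--Williams theorem, rewritten for the reduced arboricity. It therefore suffices to prove the classical form: the minimum number $k(G)$ of forests partitioning $E(G)$ equals
\[
\max_{H}\left\lceil \frac{|E(H)|}{|V(H)|-1}\right\rceil ,
\]
where the maximum runs over subgraphs $H$ with $|V(H)|\geq 2$. Once this is established, subtracting $1$ gives the displayed formula. The outer $\max\{0,\cdot\}$ is harmless: since $G$ has an edge, taking $H=K_2$ gives a ratio of $1$, so the inner maximum is at least $0$.

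\textbf{Lower bound.} Suppose $E(G)$ is partitioned into $k$ forests $F_1,\dots,F_k$, and let $H\subseteq G$. Each $F_i\cap H$ is a forest on at most $|V(H)|$ vertices, so it has at most $|V(H)|-1$ edges. Summing over $i$ gives $|E(H)|\le k(|V(H)|-1)$. Since $k$ is an integer, $k\ge \lceil |E(H)|/(|V(H)|-1)\rceil$ for every such $H$.

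\textbf{Upper bound.} Let $m$ denote the maximum in the formula, so every subgraph $H$ satisfies $|E(H)|\le m(|V(H)|-1)$. We must partition $E(G)$ into $m$ forests. The plan is to use matroid theory: each forest is an independent set of the graphic matroid $M(G)$. By the Edmonds/Nash--Williams matroid covering theorem, the ground set $E$ of $M(G)$ can be covered by $m$ independent sets if and only if $|A|\le m\cdot r(A)$ for every $A\subseteq E$. Here $r(A)$ is $|V(A)|$ minus the number of components of the subgraph spanned by $A$.

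To verify this condition, fix $A$ and split it along the components $H_1,\dots,H_c$ of $(V(A),A)$. Applying the hypothesis to each component gives $|E(H_j)|\le m(|V(H_j)|-1)$. Summing over $j$ gives $|A|\le m(|V(A)|-c)=m\,r(A)$. A cover by independent sets can be refined to a partition, because subsets of forests are forests.

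\textbf{Main obstacle.} The main difficulty is the matroid covering theorem itself. If citing it is not acceptable, I would prove the upper bound directly by the standard augmenting-path argument. Take forests $F_1,\dots,F_m$ that are edge-disjoint and cover as many edges as possible, and suppose some edge $e$ is uncovered. Try inserting $e$ into some $F_i$. If this creates a cycle, swap $e$ with an edge on that cycle and continue the process.

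If no sequence of swaps succeeds, consider the set $S$ of edges reachable by such swaps, and let $H$ be a component of the subgraph they span. One checks that each $F_i$ restricted to the vertices of $H$ is a spanning tree of $H$. Hence $H$ contains at least $m(|V(H)|-1)+1$ edges, counting the tree edges together with $e$. This contradicts the choice of $m$.
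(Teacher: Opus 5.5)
The paper does not prove this statement. It quotes Nash--Williams and leaves implicit the shift from classical to reduced arboricity, including the role of the outer $\max\{0,\cdot\}$.

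Your proposal is correct and fills in what the paper omits:
\begin{itemize}
\item The translation is right: any single edge gives a subgraph $H=K_2$ with ratio $1$, so the inner maximum is already $\geq 0$.
\item The lower bound is the standard count $|E(H)|\le k(|V(H)|-1)$.
\item Your check of Edmonds' rank condition is sound. Every component of $(V(A),A)$ contains an edge, hence at least two vertices, so the hypothesis applies to it. Summing gives $|A|\le m(|V(A)|-c)=m\,r(A)$.
\end{itemize}

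Be clear about what this route buys, though. Edmonds' matroid covering theorem generalizes Nash--Williams' forest theorem, which is exactly its graphic case. So the hard direction is still imported rather than proved, just from a stronger source than the paper's citation.

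The genuinely self-contained route is your exchange argument, but as written it is only a sketch. The crucial claim, that each $F_i$ restricted to $V(H)$ is a spanning tree, is asserted (``one checks'') rather than proved. Two details also need fixing:
\begin{itemize}
\item $H$ must be the component containing the uncovered edge $e$, not an arbitrary component.
\item The final count of at least $m(|V(H)|-1)+1$ edges should be made in the induced subgraph $G[V(H)]$. That subgraph certainly contains the restricted forests and $e$; the subgraph spanned by the swap-reachable edges need not.
\end{itemize}

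If citing Edmonds' theorem is acceptable, your argument is complete as it stands.
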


\begin{definition}[Wedge]
Let $K$ and $L$ be graphs sharing exactly one vertex $v$ and otherwise
disjoint. Their \emph{wedges}, denoted by $K\vee_vL$, is their union after
the two chosen copies of $v$ have been identified.
\end{definition}

\begin{definition}[Join]
Let $G_1=(V_1,E_1)$ and $G_2=(V_2,E_2)$ have disjoint vertex sets.
The \emph{join} $G_1\join G_2$ is the graph with
$$E(G_1\join G_2)=E_1\cup E_2\cup\{uv:u\in V_1,\ v\in V_2\}.$$
\end{definition}

\begin{lemma}[Monotonicity]\label{lem:mono}
If $H\subseteq G$, then $$\arb(H)\leq\arb(G).$$
\end{lemma}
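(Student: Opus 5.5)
The plan is to argue directly from the definition of (reduced) arboricity, by restricting a forest decomposition of $G$ to $H$. Here $H\subseteq G$ means that $V(H)\subseteq V(G)$ and $E(H)\subseteq E(G)$.

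If $E(H)=\varnothing$, then $\arb(H)=0$ by convention. Also $\arb(G)\geq 0$ in every case: it is either $0$ by convention or $k-1$ with $k\geq 1$. So this case is immediate.

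Now suppose $E(H)\neq\varnothing$. Then $E(G)\neq\varnothing$ as well. Let $k=\arb(G)+1$ and fix forests $F_1,\dots,F_k$ whose edge sets partition $E(G)$. For each $i$, let $F_i'$ be the subgraph of $H$ with vertex set $V(H)$ and edge set $E(F_i)\cap E(H)$. Each $F_i'$ is a forest, since any cycle in $F_i'$ would be a cycle in $F_i$. The sets $E(F_i')$ are pairwise disjoint and their union is $E(G)\cap E(H)=E(H)$.

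Some of the $F_i'$ may have no edges. Discarding these leaves at most $k$ forests that partition $E(H)$. The minimum number of forests partitioning $E(H)$ is therefore at most $k$, so $\arb(H)\le k-1=\arb(G)$.

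There is no real obstacle. The only care needed is with the convention for edgeless graphs and with dropping empty forests, and neither affects the inequality. As an alternative, Theorem~\ref{thm:nw} also gives the result at once: every subgraph of $H$ is a subgraph of $G$, so the maximum defining $\arb(H)$ ranges over a subset of the terms defining $\arb(G)$.
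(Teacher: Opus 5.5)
Your proof is correct and follows the same approach as the paper, which simply restricts a forest decomposition of $G$ to $H$. You additionally handle the edgeless convention and the discarding of empty forests explicitly, which the paper leaves implicit.
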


\begin{proof}
Restrict any forest decomposition of $G$ to $H$.
\end{proof}

\begin{lemma}[Edge bound]\label{lem:edgebound}
Let $G$ be a graph with $|V(G)|=n\geq 1$. If $\arb(G)\leq t$, where $\arb$ denotes the reduced arboricity, then $$|E(G)|\leq (t+1)(n-1).$$
\end{lemma}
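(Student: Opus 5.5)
The plan is to argue directly from the definition of reduced arboricity, by counting edges forest by forest. The key fact is that a forest on a vertex set of size $n$ has at most $n-1$ edges. Each component tree on $n_j$ vertices has exactly $n_j-1$ edges, so the total is $n-c\leq n-1$, where $c\geq 1$ is the number of components. Here every forest is regarded as a subgraph on all of $V(G)$ (isolated vertices allowed), so the relevant vertex count is $n$.

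First I dispose of the case $E(G)=\varnothing$: then $|E(G)|=0\leq (t+1)(n-1)$, since $n\geq 1$ and $t\geq 0$ (reduced arboricity is nonnegative, so $\arb(G)\leq t$ forces $t\geq 0$, or the bound is read for $t\geq 0$).

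Otherwise, $\arb(G)=k-1\leq t$, where $k$ is the minimum number of forests $F_1,\dots,F_k$ whose edge sets partition $E(G)$; hence $k\leq t+1$. Summing over the partition gives
$$|E(G)|=\sum_{i=1}^{k}|E(F_i)|\leq k(n-1)\leq (t+1)(n-1).$$

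An alternative route is via Theorem~\ref{thm:nw} with $H=G$, which yields $\lceil |E(G)|/(n-1)\rceil\leq t+1$ when $n\geq 2$. The direct count is cleaner and also covers $n=1$ trivially.

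No step is a real obstacle. The only care needed is the degenerate cases: $n=1$, where there are no edges, and the convention $\arb(G)=0$ for edgeless graphs.
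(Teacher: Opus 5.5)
Your proof is correct and follows the paper's argument: partition $E(G)$ into $k\le t+1$ forests, note that each has at most $n-1$ edges on the $n$ vertices of $G$, and sum. Your separate handling of the edgeless case, where the convention $\arb(G)=0$ applies and no forest partition is needed, is a small piece of care that the paper's proof leaves implicit.
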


\begin{proof}
Since $\arb(G)\leq t$, the edge set of $G$ can be partitioned into at most $t+1$ forests. Each forest on $n$ vertices has at most $n-1$ edges. Therefore, $$|E(G)|\leq (t+1)(n-1).$$
\end{proof}

\begin{lemma}[Bipartite density]
\label{lem:bipdensity}
For positive integers $s,t$, define $$g(s,t)=\frac{st}{s+t-1}.$$ Then $g$ is nondecreasing in each variable.
\end{lemma}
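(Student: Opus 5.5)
The plan is to use the symmetry of $g$ together with a direct comparison of $g(s+1,t)$ and $g(s,t)$. Since $g(s,t)=g(t,s)$, it is enough to prove that $g$ is nondecreasing in the first variable. For fixed $t\geq 1$ this amounts to showing $g(s+1,t)\geq g(s,t)$ for every positive integer $s$. Monotonicity in $t$ then follows from symmetry, and repeating the one-step inequality gives $g(s,t)\le g(s',t)$ whenever $s\le s'$.

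For the one-step inequality, both denominators $s+t-1$ and $s+t$ are positive because $s,t\geq1$. Cross-multiplying, $g(s+1,t)\geq g(s,t)$ is equivalent to
$$(s+1)t\,(s+t-1)\;\geq\; st\,(s+t).$$
Dividing by $t>0$ and expanding both sides, the left side is $s^2+st-s+s+t-1=s^2+st+t-1$, while the right side is $s^2+st$. The inequality therefore reduces to $t-1\geq 0$, which holds because $t\geq1$. Equality occurs exactly when $t=1$; this matches $g(s,1)=1$ for all $s$.

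An alternative route is to treat $s$ as a real variable and compute
$$\frac{\partial g}{\partial s}=\frac{t(s+t-1)-st}{(s+t-1)^2}=\frac{t(t-1)}{(s+t-1)^2}\geq 0.$$
The discrete computation above is cleaner and stays within the integers, so I would present that one.

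No step is a real obstacle. The only points needing care are keeping every denominator positive, so that cross-multiplying preserves the direction of the inequality, and stating explicitly that symmetry carries the result over to the second variable.
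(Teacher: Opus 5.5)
Your proposal is correct and follows the paper's proof: the paper computes $g(s+1,t)-g(s,t)=\frac{t(t-1)}{(s+t)(s+t-1)}\ge 0$, which has the same numerator as your cross-multiplied inequality, and then invokes symmetry. Your cross-multiplication is the same calculation, and your explicit remark that iterating the one-step inequality gives $g(s,t)\le g(s',t)$ for $s\le s'$ is a small detail the paper leaves implicit.
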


\begin{proof}
Fix $t\geq1$. For every $s\geq1$, we have

$$
\begin{aligned}
g(s+1,t)-g(s,t)
&=
\frac{(s+1)t}{s+t}
-
\frac{st}{s+t-1}= \\
&=
\frac{t(t-1)}{(s+t)(s+t-1)}
\geq0.
\end{aligned}
$$

Thus, $g(s,t)$ is nondecreasing in $s$. By symmetry, it is also
nondecreasing in $t$.
\end{proof}

\begin{corollary}
\label{cor:kmn}
For positive integers $m,n$, $$\arb(K_{m,n})=\left\lceil\frac{mn}{m+n-1}\right\rceil-1.$$
\end{corollary}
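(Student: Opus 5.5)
The plan is to apply Theorem~\ref{thm:nw} to $G=K_{m,n}$ and show that the maximum over subgraphs is attained by $G$ itself.

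First, the lower bound. Taking $H=K_{m,n}$, which has $m+n\geq 2$ vertices and $mn$ edges, gives
$$\arb(K_{m,n})\geq \left\lceil\frac{mn}{m+n-1}\right\rceil-1 .$$
This quantity is at least $0$, since $mn\geq m+n-1$ is equivalent to $(m-1)(n-1)\geq 0$. So the outer $\max\{0,\cdot\}$ in Theorem~\ref{thm:nw} plays no role.

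Second, the upper bound. Let $H\subseteq K_{m,n}$ with $|V(H)|\geq 2$. The ratio $|E(H)|/(|V(H)|-1)$ can only increase if we pass to the subgraph induced on $V(H)$, so we may assume $H$ is induced. Write $s=|V(H)\cap X|$ and $t=|V(H)\cap Y|$. Then $H=K_{s,t}$, and $|E(H)|=st$ when $s,t\geq 1$.

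If $s=0$ or $t=0$, then $H$ has no edges and contributes $-1$, so it is dominated by the term for $G$. Otherwise $1\leq s\leq m$ and $1\leq t\leq n$. Applying Lemma~\ref{lem:bipdensity} twice, once in each variable, gives
$$g(s,t)\leq g(m,t)\leq g(m,n).$$
Since the ceiling is monotone, every term in the Nash--Williams maximum is at most $\lceil mn/(m+n-1)\rceil-1$, which is the upper bound.

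The only point needing care is the reduction to induced subgraphs. Deleting edges while keeping the vertex set fixed lowers the ratio, so it suffices to consider vertex subsets. Once that is noted, the monotonicity lemma finishes the argument, and I expect no real obstacle.
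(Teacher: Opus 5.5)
Your proposal is correct and follows the paper's proof essentially step for step: the lower bound comes from taking the whole vertex set in Theorem~\ref{thm:nw}, and the upper bound comes from splitting a vertex subset into $s$ and $t$ vertices from the two parts and applying Lemma~\ref{lem:bipdensity}. You also justify the reduction to induced subgraphs and check via $(m-1)(n-1)\geq 0$ that the outer maximum is harmless; the paper leaves both of these small points implicit.
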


\begin{proof}
Taking the whole vertex set of $K_{m,n}$ in
Theorem~\ref{thm:nw} gives $$\arb(K_{m,n})\geq\left\lceil
\frac{|E(K_{m,n})|}{|V(K_{m,n})|-1}\right\rceil-1=\left\lceil\frac{mn}{m+n-1}\right\rceil-1.$$

For the reverse inequality, let $S\subseteq V(K_{m,n})$ with
$|S|\geq2$. Suppose that $S$ contains $s$ vertices from the first
part and $t$ vertices from the second part. Then $|E(K_{m,n}[S])|=st$ and $|S|-1=s+t-1.$ If $s=0$ or $t=0$, then $K_{m,n}[S]$ has no edges, and hence$$\frac{|E(K_{m,n}[S])|}{|S|-1}=0.$$
If $s,t\geq1$, then $s\leq m$ and $t\leq n$, so Lemma~\ref{lem:bipdensity} gives $$ \frac{|E(K_{m,n}[S])|}{|S|-1}=\frac{st}{s+t-1}\leq\frac{mn}{m+n-1}.$$
Therefore, for every $S\subseteq V(K_{m,n})$ with $|S|\geq2$,
$$\left\lceil\frac{|E(K_{m,n}[S])|}{|S|-1}\right\rceil\leq \left\lceil\frac{mn}{m+n-1}\right\rceil.$$Applying Theorem~\ref{thm:nw}, we obtain$$\arb(K_{m,n})\leq\left\lceil\frac{mn}{m+n-1}\right\rceil-1.$$
Combining the two inequalities proves the result.
\end{proof}

\begin{corollary}
\label{cor:complete}
For every $n\geq2$, $$ \arb(K_n)=\left\lceil\frac{n}{2}\right\rceil-1. $$
\end{corollary}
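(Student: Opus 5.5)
We follow the pattern of Corollary~\ref{cor:kmn} and apply Theorem~\ref{thm:nw} to prove both inequalities.

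\emph{Lower bound.} Take $H=K_n$ itself, so $|V(H)|=n$ and $|E(H)|=\binom{n}{2}$. Then
$$\frac{|E(H)|}{|V(H)|-1}=\frac{n(n-1)/2}{n-1}=\frac{n}{2},$$
and Theorem~\ref{thm:nw} gives $\arb(K_n)\geq\lceil n/2\rceil-1$.

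\emph{Upper bound.} Let $H\subseteq K_n$ be any subgraph with $k=|V(H)|$, where $2\le k\le n$. Since $H$ is a subgraph of $K_k$, we have $|E(H)|\le\binom{k}{2}$, and hence
$$\frac{|E(H)|}{k-1}\le\frac{k}{2}\le\frac{n}{2}.$$
Taking ceilings, $\lceil |E(H)|/(k-1)\rceil\le\lceil n/2\rceil$ for every admissible $H$. The maximum in Theorem~\ref{thm:nw} is therefore at most $\lceil n/2\rceil-1$. This quantity is nonnegative for $n\ge2$, so the outer $\max\{0,\cdot\}$ has no effect.

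Combining the two bounds gives the claimed equality. The only step needing any care is the density monotonicity: the ratio $k/2$ increases with $k$, so the whole vertex set is the densest choice. This plays the role that Lemma~\ref{lem:bipdensity} played for $K_{m,n}$.

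As an optional sanity check one can give an explicit decomposition, though it is not needed once Theorem~\ref{thm:nw} is assumed. For even $n$, $K_n$ splits into $n/2$ Hamiltonian paths via the standard zigzag construction. For odd $n$, one uses $K_{n+1}$ and monotonicity (Lemma~\ref{lem:mono}), since $\lceil n/2\rceil=(n+1)/2$.
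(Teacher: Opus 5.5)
Your proof is correct and follows essentially the same route as the paper: you apply Theorem~\ref{thm:nw}, where the whole vertex set gives density $n/2$ and any subgraph on $k$ vertices has density at most $\binom{k}{2}/(k-1)=k/2\le n/2$. Your version bounds arbitrary subgraphs explicitly, where the paper only compares induced copies of $K_s$, but the argument is the same.
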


\begin{proof}
An induced copy of $K_s$ has density $$ \frac{\binom{s}{2}}{s-1}=\frac{s}{2},$$
which is maximized at $s=n$. We apply Theorem~\ref{thm:nw} to get the statement of Corollary \ref{cor:complete}.
\end{proof}

\begin{theorem}[D. Fern\'andez-Ternero, E. Mac\'ias-Virg\'os, E. Minuz and J.A. Vilches ~\cite{FTMVMV2}]
\label{thm:graph-gscat}
Let $G$ be a connected graph containing at least one edge. Then $$ \gscat(G)=\arb(G). $$
\end{theorem}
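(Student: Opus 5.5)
The plan is to prove the two inequalities $\gscat(G)\leq\arb(G)$ and $\arb(G)\leq\gscat(G)$ separately, after first identifying which subcomplexes of a graph are strongly collapsible. A subcomplex of a one-dimensional complex is itself a graph, possibly with isolated vertices. So the key preliminary step is the following claim:
\begin{center}
a graph, regarded as a simplicial complex, is strongly collapsible if and only if it is a tree.
\end{center}

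\textbf{Characterizing strongly collapsible graphs.} Let $v$ be a vertex of a graph $H$ and look at when $v$ can be dominated.
\begin{itemize}
\item If $v$ is isolated, the only maximal simplex containing $v$ is $\{v\}$. It contains no other vertex, so $v$ is not dominated.
\item If $v$ has an edge, the maximal simplices containing $v$ are exactly its edges $vw$. Every one of them contains a fixed $u\neq v$ only when $v$ has degree one with unique neighbour $u$.
\end{itemize}
Hence an elementary strong collapse of a graph is exactly the deletion of a leaf together with its pendant edge.

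Deleting a leaf changes neither the number of connected components nor the quantity $|E|-|V|+c$, where $c$ counts components; this quantity is the cycle rank. A single vertex has one component and cycle rank $0$. So any graph that strongly collapses to a point is connected with cycle rank $0$, i.e. a tree. Conversely, every tree with at least two vertices has a leaf, and removing it leaves a tree, so induction on $|V|$ shows every tree is strongly collapsible. I expect this characterization to be the main point needing care, mainly the observation that isolated vertices are never dominated. Once it is in place, both inequalities are bookkeeping.

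\textbf{Upper bound $\gscat(G)\leq\arb(G)$.} Let $k=\arb(G)+1$ and partition $E(G)$ into forests $F_1,\dots,F_k$. Each $F_i$ is an acyclic edge set in the connected graph $G$, so it extends to a spanning tree $T_i$ of $G$ by the standard augmentation of an acyclic set to a spanning tree. Each $T_i$ is a subcomplex of $G$ and is strongly collapsible by the claim. Together the $T_i$ contain every edge, and they contain every vertex since each is spanning. This gives $G=T_1\cup\cdots\cup T_k$, so $\gscat(G)\leq k-1=\arb(G)$. Connectedness of $G$ is used here and only here: it lets each possibly disconnected forest be enlarged into a single connected strongly collapsible piece.

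\textbf{Lower bound $\arb(G)\leq\gscat(G)$.} Let $m=\gscat(G)$ and write $G=C_0\cup\cdots\cup C_m$ with each $C_i$ strongly collapsible. By the claim each $C_i$ is a tree, and every edge of $G$ lies in some $C_i$. Assign each edge to one $C_i$ that contains it, say the one with the least index. This partitions $E(G)$ into at most $m+1$ edge sets, each a subset of a tree and therefore a forest. Hence the classical arboricity is at most $m+1$, i.e. $\arb(G)\leq m$. The hypothesis that $G$ has an edge ensures the reduced arboricity is the one computed from forest partitions, so it matches the convention in the definition of $\arb$.
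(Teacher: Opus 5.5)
Your proposal is correct and follows the same route as the paper. Both identify strongly collapsible one-dimensional complexes with trees. Both enlarge a minimal forest decomposition to a cover by spanning trees inside the connected graph $G$ to get $\gscat(G)\leq\arb(G)$, and both assign each edge to one covering tree for the reverse inequality. The one difference is that you actually prove the tree characterization: elementary strong collapses of a graph are exactly leaf deletions, and these preserve the number of components and the cycle rank. The paper only asserts that characterization.
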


\begin{proof}
Briefly, a strongly collapsible one-dimensional complex is a tree. A decomposition of $G$ into $a=\arb(G)+1$ forests can be enlarged, inside the connected graph $G$, to a cover by $a$ trees;
hence $\gscat(G)\leq a-1=\arb(G)$. Conversely, from a cover by $m+1$ trees, assign
each edge to one tree containing it. The assigned edge sets are forests and
partition $E(G)$, so $\arb(G)\leq m$.
\end{proof}

\section{Main Results}

\subsection{Arboricity of Wedges}

In this subsection, we prove a theorem that determines the arboricity of the wedge of two graphs in terms of the arboricities of its factors. We then derive the corresponding result for finite iterated wedges and illustrate these formulas with several examples.

\begin{theorem}
\label{thm:wedge} Let $G=K\vee_v L$ be the wedges of $K$ and $L$ at $v$. Then $$\arb(G)=\max\{\arb(K),\arb(L)\}.$$
\end{theorem}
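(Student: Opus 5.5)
The plan is to prove the two inequalities separately. The lower bound $\arb(G)\geq\max\{\arb(K),\arb(L)\}$ is immediate from Lemma~\ref{lem:mono}, since both $K$ and $L$ are subgraphs of $G=K\vee_v L$.

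For the upper bound, set $t=\max\{\arb(K),\arb(L)\}$. The argument is a direct gluing of forest decompositions rather than an appeal to Theorem~\ref{thm:nw}. By definition, $E(K)$ can be partitioned into at most $t+1$ forests $F_0,\dots,F_t$, padding with empty forests if fewer are needed; the case $E(K)=\varnothing$ is covered by this padding. Likewise, $E(L)$ is partitioned into forests $F'_0,\dots,F'_t$. For each $i$ we define $H_i=F_i\cup F'_i$, a subgraph of $G$. Since $E(K)\cap E(L)=\varnothing$ and $E(G)=E(K)\sqcup E(L)$, the edge sets $E(H_0),\dots,E(H_t)$ partition $E(G)$. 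It remains to check that each $H_i$ is acyclic, which gives $\arb(G)\leq t$.

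The acyclicity check is the only point needing care. Suppose $C$ is a cycle in $H_i$. Because $F_i$ and $F'_i$ are forests, $C$ must contain edges from both $K$ and $L$. Traversing $C$, we pass from an edge of $K$ to an edge of $L$ at a vertex common to $K$ and $L$, and by the definition of the wedge the only such vertex is $v$. A cycle that contains edges of both types must switch between them at least twice. A simple cycle passes through $v$ only once, so it can switch at most once there, which is a contradiction.

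Stated more cleanly, every cycle of $G$ lies entirely in $K$ or entirely in $L$, because $v$ is a cut vertex separating $V(K)\setminus\{v\}$ from $V(L)\setminus\{v\}$. I expect this to be the main (though mild) obstacle, and I would write it out explicitly. Finally, the edge case where both $K$ and $L$ are edgeless gives $\arb(G)=0$ trivially. Combining the two inequalities yields the statement.
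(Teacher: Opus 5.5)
Your proof is correct, and its upper bound takes a genuinely different route from the paper's. The paper never glues decompositions. Instead it applies Nash--Williams' formula (Theorem~\ref{thm:nw}) to $G$, takes an extremal subgraph $H$, and splits it as $H_K\cup H_L$ meeting only in $v$. It then uses the mediant inequality $\frac{p+q}{r+s}\le\max\{\frac{p}{r},\frac{q}{s}\}$ together with the density bound $|E(H_K)|\le(\arb(K)+1)(|V(H_K)|-1)$ and its analogue for $L$.

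You instead pair the forests $F_i$ and $F'_i$ and show $F_i\cup F'_i$ is acyclic because every cycle of $G$ lies in $K$ or in $L$. Your count of $K$/$L$ transitions along a cycle is a complete proof of this. It is better to rest on that count than on the phrase ``cut vertex'', which is false in degenerate cases such as $V(K)=\{v\}$, even though the conclusion still holds there.

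Your route has three advantages. It is elementary and avoids Nash--Williams altogether. It is constructive: it turns forest decompositions of the factors into one of the wedge, which is what is needed for the explicit tree covers used for $\gscat$ in the examples. It also sidesteps the one delicate step in the paper, the claim that $v\in V(H)$. The paper justifies this by a path argument that tacitly assumes $H$ is connected. A disconnected $H$ only has smaller density, but the paper does not address that case.

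The paper's route, in turn, keeps the wedge and join theorems in the same density framework. Its mediant step also proves a slightly stronger fact: the maximal density $\max_H |E(H)|/(|V(H)|-1)$ of the wedge equals the larger of the two factors' maximal densities.
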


\begin{proof}
\emph{Lower bound.} Since $K,L\subseteq G$ as subgraphs, $\arb(G)\ge\arb(K)$ and $\arb(G)\ge\arb(L)$ by Lemma~\ref{lem:mono}. Hence, we obtain that $\arb(G)\ge\max\{\arb(K),\arb(L)\}.$

\emph{Upper bound.} Let $H\subseteq G$, $|V(H)|\ge 2$, attain the maximum in Theorem~\ref{thm:nw} for $\arb(G)$, and set $H_K:=H\cap K$, $H_L:=H\cap L$. We first consider the case when $H$ is contained entirely in one of the two graphs. If $H\subseteq K$ (or $H\subseteq L$), then trivially inequalities hold: $\lceil|E(H)|/(|V(H)|-1)\rceil-1\le\arb(K)\le\max\{\arb(K),\arb(L)\}$.

It remains to consider the case when $H$ meets both $K\setminus\{v\}$ and $L\setminus\{v\}$. Since $v$ is the only vertex joining $K$ and $L$ in $G$, any path in $H$ from $K\setminus\{v\}$ to $L\setminus\{v\}$ must pass through $v$. Therefore, $v\in V(H)$. Moreover, since $E(K)\cap E(L)=\emptyset$, the edges of $H$ split into the edges lying in $H_K$ and those lying in $H_L$. Consequently, $$|E(H)|=|E(H_K)|+|E(H_L)|,\qquad |V(H)|=|V(H_K)|+|V(H_L)|-1,$$
where the second identity follows from the fact that $v$ is the only shared vertex of $H_K$ and $H_L$.

Now, writing $p=|E(H_K)|$, $q=|E(H_L)|$, $r=|V(H_K)|-1$, and $s=|V(H_L)|-1$, we have $|E(H)|=p+q$ and $|V(H)|-1=r+s$. If $r=0$, then $p=0$ as well, and therefore $(p+q)/(r+s)=q/s$. The case $s=0$ is symmetric. Otherwise, when both $r$ and $s$ are positive, the mediant inequality gives $$ \frac{|E(H)|}{|V(H)|-1}=\frac{p+q}{r+s}\le\max\left\{\frac{p}{r},\frac{q}{s}\right\}=$$ $$=\max\left(\frac{|E(H_K)|}{|V(H_K)|-1},\frac{|E(H_L)|}{|V(H_L)|-1}\right)\le\max\{\arb(K)+1,\arb(L)+1\}.$$

Thus, in either case, the density of $H$ does not exceed $\max\{\arb(K)+1,\arb(L)+1\}$. Since $\max\{\arb(K),\arb(L)\}\in\mathbb Z$, taking ceilings and subtracting one gives $\arb(G)\le\max\{\arb(K),\arb(L)\}$.
Combining both bounds proves the theorem.
\end{proof}

An immediate induction gives the following form, which is the most useful one
in examples.

\begin{corollary}
\label{cor:iterated-wedge}
For a finite iterated wedges, $$ \arb\left(\bigvee_{i=1}^{m}G_i\right) =\max_{1\leq i\leq m}\{\arb(G_i)\}. $$
\end{corollary}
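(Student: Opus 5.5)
The plan is to argue by induction on $m$, using Theorem~\ref{thm:wedge} as the inductive step. First I will fix what a finite iterated wedge means. It is a graph $W_m$ built recursively: $W_1=G_1$, and $W_k=W_{k-1}\vee_{v_k}G_k$ for $k\geq2$, where $G_k$ meets the previously built graph $W_{k-1}$ in exactly one vertex $v_k$ and is otherwise disjoint from it. The wedge points $v_k$ may coincide, as in a bouquet, or differ, as in a chain or cactus-like configuration. The only requirement is that each new factor is attached along a single vertex of what has already been built.

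The base case $m=1$ is trivial, since both sides equal $\arb(G_1)$. For the inductive step, assume $\arb(W_{k-1})=\max_{1\leq i\leq k-1}\arb(G_i)$. Because $W_k=W_{k-1}\vee_{v_k}G_k$ satisfies exactly the hypotheses of Theorem~\ref{thm:wedge}, with $K=W_{k-1}$ and $L=G_k$, that theorem gives
$$\arb(W_k)=\max\{\arb(W_{k-1}),\arb(G_k)\}=\max_{1\leq i\leq k}\arb(G_i).$$
Here the last equality uses the inductive hypothesis and the associativity of $\max$.

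The only point needing care is the bookkeeping of the construction, which I expect to be the main (if mild) obstacle. If the iterated wedge is presented as all $G_i$ glued at one common vertex $v$, I must check that $W_{k-1}$ and $G_k$ share exactly the vertex $v$ and no edges. This holds because the $G_i$ are pairwise disjoint apart from $v$. Hence the pair is a genuine wedge in the sense of the definition, and Theorem~\ref{thm:wedge} applies at each stage. No hypothesis about edges or connectivity is needed, since Theorem~\ref{thm:wedge} places no such restriction on $K$ and $L$. The convention $\arb=0$ for edgeless graphs is consistent with Lemma~\ref{lem:mono}, so factors without edges cause no trouble.
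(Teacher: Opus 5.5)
Your proposal is correct and matches the paper's argument, which is the one-line remark that the corollary follows by an immediate induction on $m$ from Theorem~\ref{thm:wedge}. Your recursive definition of the iterated wedge $W_k=W_{k-1}\vee_{v_k}G_k$ simply makes explicit the bookkeeping that the paper leaves implicit.
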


\begin{corollary}
\label{cor:iterated-wedge-gscat}
Let $G_1,\ldots,G_m$ be connected graphs containing at least one edge. Then $$\gscat\left(\bigvee_{i=1}^{m}G_i\right)=\max_{1\leq i\leq m}\{\gscat(G_i)\}.$$
\end{corollary}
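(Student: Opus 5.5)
The plan is to transfer Corollary~\ref{cor:iterated-wedge} through Theorem~\ref{thm:graph-gscat}. That theorem requires two hypotheses: the graph must be connected and must contain at least one edge. So the first step is to check that the iterated wedge $W=\bigvee_{i=1}^{m}G_i$ satisfies both. I will do this by induction on $m$, building $W$ as $W_{k}=W_{k-1}\vee_{v_k}G_k$ with $W_1=G_1$. If $W_{k-1}$ and $G_k$ are connected, then so is their union $W_k$: they share the wedge vertex $v_k$, so any vertex of $W_{k-1}$ reaches any vertex of $G_k$ through a path via $v_k$. Moreover, $E(W)\supseteq E(G_1)\neq\varnothing$, so $W$ has at least one edge.

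With both hypotheses in place, Theorem~\ref{thm:graph-gscat} gives $\gscat(W)=\arb(W)$. Corollary~\ref{cor:iterated-wedge} then gives $\arb(W)=\max_i \arb(G_i)$. Since each $G_i$ is itself connected with at least one edge, Theorem~\ref{thm:graph-gscat} applies to every factor and gives $\arb(G_i)=\gscat(G_i)$. Chaining these equalities yields
$$\gscat(W)=\arb(W)=\max_{1\le i\le m}\arb(G_i)=\max_{1\le i\le m}\gscat(G_i).$$

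No step here is a real obstacle. The only point needing care is the connectivity of the iterated wedge, which is the hypothesis that lets Theorem~\ref{thm:graph-gscat} apply to the whole graph and not just to its factors. A further remark is that Corollary~\ref{cor:iterated-wedge} is valid regardless of how the successive wedge points are chosen, since each step is an instance of Theorem~\ref{thm:wedge}. Hence the result does not depend on the choice of base vertices.
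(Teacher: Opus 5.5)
Your proof is correct and follows the paper's route: apply Theorem~\ref{thm:graph-gscat} to the wedge and to each factor, and use Corollary~\ref{cor:iterated-wedge} in between. Your explicit check that the iterated wedge is connected and has at least one edge supplies a hypothesis the paper's one-line proof leaves implicit.
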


\begin{proof}
This follows immediately from Corollary~\ref{cor:iterated-wedge} and Theorem~\ref{thm:graph-gscat}, since $\gscat(G)=\arb(G)$ for every connected graph containing at least one edge.
\end{proof}

\subsubsection{Examples and computations}

\begin{example}
\label{ex:bouquet}
Let $B=C_{n_1}\vee C_{n_2}\vee\cdots\vee C_{n_r},$ $n_i\geq3$ be a bouquet of cycles (see Figure ~\ref{fig:bouquet}). For every cycle $C_n$ gives
$\arb(C_n)=1$. Then Corollary~\ref{cor:iterated-wedge} gives $\arb(B)=1.$ Since $B$ is connected, Corollary~\ref{cor:iterated-wedge-gscat} also gives $\gscat(B)=1$. Alternatively, we can cover B with two explicits trees (see Figure ~\ref{fig:bouquet}).  
\end{example}

\begin{figure}[H]
    \centering
    \includegraphics[width=0.5\textwidth]{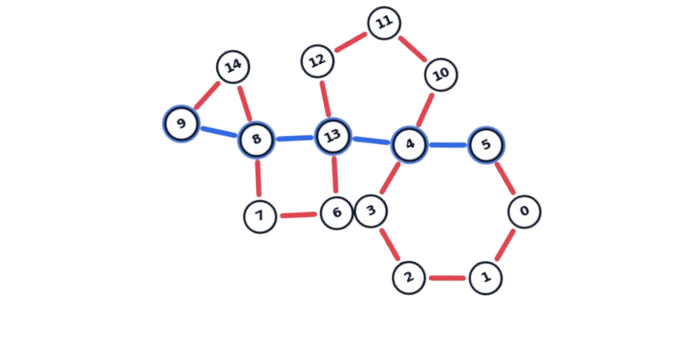}
    \caption{A bouquet of four cycles with $\gscat(B)=1$ explicitly shown two trees in red and blue.}
    \label{fig:bouquet}
\end{figure}

\begin{example}Let G be a Cactus graph (see Figure ~\ref{fig:cactus} (a),(b)). Recall that every connected cactus graph is obtained by successively attaching blocks
that are either single edges or cycles at cut vertices. Since
$\arb(K_2)=0$ and $\arb(C_n)=1$, Corollary~\ref{cor:iterated-wedge} implies
$$
\arb(G)=
\begin{cases}
0, & \text{if $G$ is a tree},\\
1, & \text{if $G$ contains a cycle}.
\end{cases}
$$
\end{example}

\begin{figure}[H]
    \centering
    \begin{subfigure}[b]{0.4\textwidth}
        \centering  
        \includegraphics[width=1\textwidth]{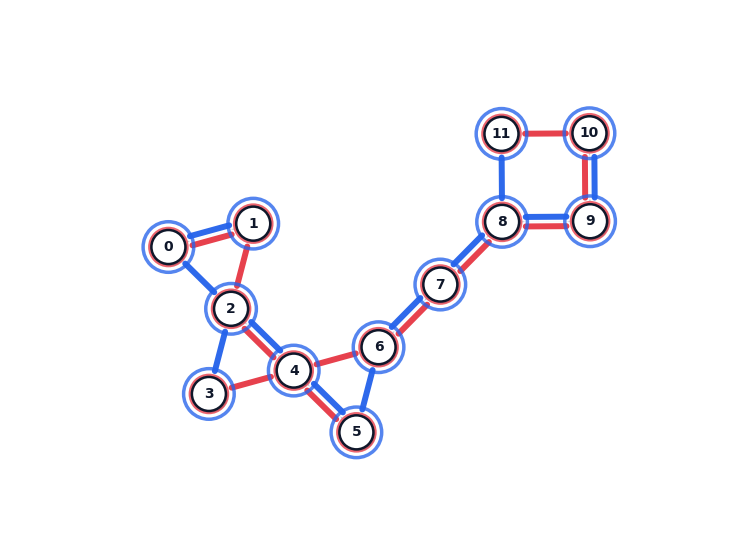}
        \caption{A cactus graph with $\gscat(G)=1$ and an explicit decomposition into two trees, shown in red and blue.}
        \label{fig:cactus_with_cycle}
    \end{subfigure}
    \hfill
    \begin{subfigure}[b]{0.4\textwidth}
        \centering
        \includegraphics[width=1\textwidth]{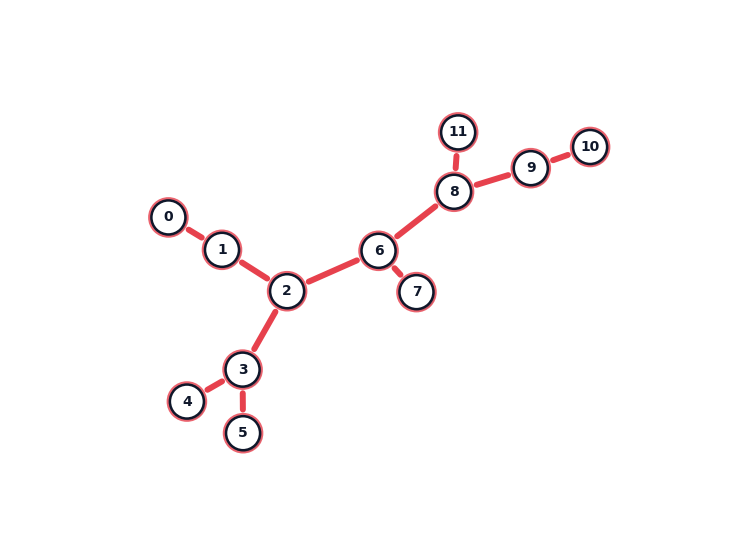}
        \caption{A tree with $\gscat(G)=0$, shown as a single red tree.}
        \label{fig:cactus_as_tree}
    \end{subfigure}
    \caption{Cactus graphs with arboricities $1$ and $0$ and their explicit decompositions into trees.}
    \label{fig:cactus}
\end{figure}

\begin{example}
\label{ex:dense-wedge}
Consider the following graph $G=K_4 \vee K_5$ (see Figure ~\ref{fig:dense-wedge}). By Corollary~\ref{cor:complete},
$\arb(K_5)=2$ and $\arb(K_4)=1$. Hence $\arb(G)=\max\{2,1\}=2$ 
\end{example}

\begin{figure}[H]
    \centering
    \includegraphics[width=0.45\textwidth]{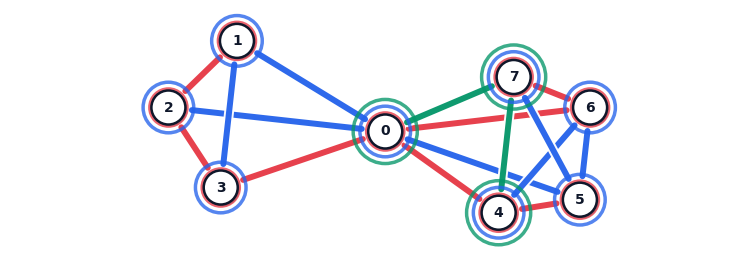}
    \caption{A wedges $K_4\vee K_5$ with $\gscat(K_4 \vee K_5) = 2$ and an explicit decomposition into three trees, shown in red, blue, and green.}
    \label{fig:dense-wedge}
\end{figure}

\subsection{Arboricity of Joins}

In this subsection, we establish lower and upper bounds for the arboricity of the join of two graphs. Using the equality between arboricity and geometric simplicial category for connected graphs, we obtain analogous bounds for $\gscat$. We then illustrate these results with several examples.

\begin{theorem}
\label{thm:join}
Let $G_i=(V_i,E_i)$ be graphs with $|V_i|=n_i\geq 1$, for $i=1,2$, and set $$B=\left\lceil\frac{n_1n_2}{n_1+n_2-1}\right\rceil-1.$$ Then $$\max\{\arb(G_1),\arb(G_2),B\}\leq\arb(G_1\join G_2)\leq\arb(G_1)+\arb(G_2)+B+1.$$
\end{theorem}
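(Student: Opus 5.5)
The plan is to prove the two inequalities separately. The lower bound comes from monotonicity and the upper bound from gluing forest decompositions.

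\emph{Lower bound.} The graphs $G_1$ and $G_2$ are subgraphs of $G_1\join G_2$. Moreover, the edges between $V_1$ and $V_2$ form a spanning subgraph isomorphic to $K_{n_1,n_2}$. By Lemma~\ref{lem:mono} we get
$$\arb(G_1\join G_2)\ge \max\{\arb(G_1),\arb(G_2),\arb(K_{n_1,n_2})\}.$$
Corollary~\ref{cor:kmn} identifies $\arb(K_{n_1,n_2})$ with $B$. This holds when $n_1,n_2\ge1$; in the degenerate case $n_1=n_2=1$ both sides equal $0$, which is consistent with the convention $\arb=0$ for edgeless graphs.

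\emph{Upper bound.} The edge set of $G_1\join G_2$ is the disjoint union $E_1\sqcup E_2\sqcup E(K_{n_1,n_2})$. I will decompose each part into forests and count.
\begin{itemize}
\item $E_1$ splits into $\arb(G_1)+1$ forests.
\item $E_2$ splits into $\arb(G_2)+1$ forests.
\item The bipartite part splits into $B+1$ forests.
\end{itemize}
Each forest on a subset of vertices remains a forest in $G_1\join G_2$, so this gives a partition into
$$(\arb(G_1)+1)+(\arb(G_2)+1)+(B+1)$$
forests. That count yields only $\arb(G_1)+\arb(G_2)+B+2$, one more than claimed. To save one forest, I use the fact that $V_1$ and $V_2$ are disjoint: the union of a forest $F_1$ on $V_1$ and a forest $F_2$ on $V_2$ is again a forest, since no cycle can use edges from both sides when there are no connecting edges. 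So I will pair one forest of the $G_1$ decomposition with one forest of the $G_2$ decomposition. This leaves
$$\arb(G_1)+\arb(G_2)+1+(B+1)$$
forests, and hence $\arb(G_1\join G_2)\le \arb(G_1)+\arb(G_2)+B+1$.

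When $E_1$ or $E_2$ is empty, that part contributes no forests. The count then only improves, so I will note this briefly. Strictly speaking, one could pair all $\min$ many forests, which gives an even better bound, but the stated one suffices.

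The only step needing care is the merging observation, namely that vertex-disjoint forests have an acyclic union, together with keeping track of the off-by-one conventions of reduced arboricity. I expect no real obstacle beyond this bookkeeping.
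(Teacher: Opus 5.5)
Your proof is correct. The lower bound is the same monotonicity argument the paper uses, but your upper bound takes a genuinely different route.

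The paper builds no forests. It applies Theorem~\ref{thm:nw} to every $S\subseteq V_1\cup V_2$ and bounds $|E((G_1\join G_2)[S])|$ by $(\arb(G_1)+1)(|S_1|-1)+(\arb(G_2)+1)(|S_2|-1)+(B+1)(|S|-1)$, using Lemma~\ref{lem:edgebound} and the monotonicity of $g(s,t)=st/(s+t-1)$. The saving of one comes from $(|S_1|-1)+(|S_2|-1)=|S|-2$.

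You instead concatenate forest partitions of the three edge-disjoint pieces $E_1$, $E_2$, $E(K_{n_1,n_2})$, and save one forest by merging a forest on $V_1$ with a forest on $V_2$. This merging is the constructive counterpart of the paper's use of the fact that the $G_1$- and $G_2$-contributions sit on disjoint vertex sets.

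What your route buys: it is more elementary, since Nash--Williams enters only through Corollary~\ref{cor:kmn}, to split $K_{n_1,n_2}$ into $B+1$ forests. It also yields explicit forests, hence explicit tree covers realizing the bound on $\gscat$, in the spirit of the paper's examples. Pairing all $\min\{\arb(G_1),\arb(G_2)\}+1$ available pairs, as you remark, gives the sharper bound $\arb(G_1\join G_2)\le\max\{\arb(G_1),\arb(G_2)\}+B+1$.

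What the paper's route buys: its density count is non-constructive but keeps the proof parallel to its wedge argument. It would also give your sharper bound if one estimated $(a_1+1)(s-1)+(a_2+1)(t-1)\le(\max\{a_1,a_2\}+1)(|S|-2)$, with $a_i=\arb(G_i)$, $s=|S_1|$, $t=|S_2|$, though the paper does not state it.

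One small slip: $K_{1,1}$ is a single edge, not an edgeless graph. So $\arb(K_{1,1})=0$ holds by definition (one forest), and Corollary~\ref{cor:kmn} already covers $n_1=n_2=1$ without any special case.
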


\begin{proof}
\emph{Lower bound.}
The subgraph of $G_1\join G_2$ induced by $V_1$ is exactly $G_1$, since the join operation adds no new edges between vertices of $V_1$. Thus, $G_1\subseteq G_1\join G_2$, and Lemma~\ref{lem:mono} gives $\arb(G_1)\leq\arb(G_1\join G_2)$. By the same argument, $\arb(G_2)\leq\arb(G_1\join G_2)$. Moreover, if we retain only the edges joining vertices of $V_1$ to vertices of $V_2$, we obtain the complete bipartite graph $K_{n_1,n_2}$. Hence, $K_{n_1,n_2}\subseteq G_1\join G_2$. Therefore, by Lemma~\ref{lem:mono} and Corollary~\ref{cor:kmn}, $B=\arb(K_{n_1,n_2})\leq\arb(G_1\join G_2)$. Combining these three inequalities, we obtain $$\arb(G_1\join G_2)\geq\max\{\arb(G_1),\arb(G_2),B\}.$$

\emph{Upper bound.}
Let $S\subseteq V_1\cup V_2$ be a subset with $|S|\geq2$, and set $S_1=S\cap V_1$ and $S_2=S\cap V_2$. The edges of the induced subgraph$(G_1\join G_2)[S]$ consist of the edges of $G_1[S_1]$, the edges of $G_2[S_2]$, and all edges joining a vertex of $S_1$ to a vertex of $S_2$. Consequently,
$$|E((G_1\join G_2)[S])|=|E(G_1[S_1])|+|E(G_2[S_2])|+|S_1|\cdot|S_2|.$$

\emph{The first two terms.}
If $S_1=\emptyset$, then $|E(G_1[S_1])|=0$. Otherwise, let $k=|S_1|\geq1$. By Lemma~\ref{lem:mono}, $\arb(G_1[S_1])\leq\arb(G_1)$, and Lemma~\ref{lem:edgebound} therefore gives $|E(G_1[S_1])|\leq(\arb(G_1)+1)(k-1)$. Similarly, $|E(G_2[S_2])|\leq(\arb(G_2)+1)(|S_2|-1)$.

\emph{The cross term.}
Write $s=|S_1|\leq n_1$ and $t=|S_2|\leq n_2$. If $s=0$ or $t=0$, then the cross term is equal to zero. Suppose now that $s,t\geq1$. By Lemma~\ref{lem:bipdensity}, $$g(s,t)\leq g(n_1,t)\leq g(n_1,n_2)=\frac{n_1n_2}{n_1+n_2-1}\leq B+1.$$ Since $g(s,t)=st/(s+t-1)$ and $s+t=|S|$, it follows that
$$|S_1||S_2|=g(s,t)(s+t-1)\leq(B+1)(|S|-1).$$

\emph{Combining the estimates.}
Suppose first that both $S_1$ and $S_2$ are nonempty. Adding the three estimates above, we obtain
$$
\begin{aligned}
|E((G_1\join G_2)[S])|
&\leq(\arb(G_1)+1)(|S_1|-1)+(\arb(G_2)+1)(|S_2|-1)+ \\
&\qquad +(B+1)(|S|-1).\end{aligned}$$
Since$(|S_1|-1)+(|S_2|-1)=|S|-2$, this implies $$|E((G_1\join G_2)[S])|\leq\bigl(\arb(G_1)+\arb(G_2)+B+2\bigr)(|S|-1).$$
If one of $S_1$ or $S_2$ is empty, the same estimate follows directly from Lemma~\ref{lem:mono}. Therefore, Theorem~\ref{thm:nw} gives $\arb(G_1\join G_2)\leq\arb(G_1)+\arb(G_2)+B+1$.
Combining the lower and upper bounds, we conclude that
$$\max\{\arb(G_1),\arb(G_2),B\}\leq\arb(G_1\join G_2)\leq\arb(G_1)+\arb(G_2)+B+1.$$
\end{proof}

\begin{corollary}
\label{cor:join-gscat}
Let $G_i=(V_i,E_i)$ be connected graphs containing at least one edge, with $|V_i|=n_i$, for $i=1,2$, and set
$ B=\left\lceil\frac{n_1n_2}{n_1+n_2-1}\right\rceil-1.$ Then $$\max\{\gscat(G_1),\gscat(G_2),B\}\leq\gscat(G_1\join G_2)\leq\gscat(G_1)+\gscat(G_2)+B+1.$$
\end{corollary}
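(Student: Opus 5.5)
The plan is to transfer Theorem~\ref{thm:join} to $\gscat$ through Theorem~\ref{thm:graph-gscat}. That requires checking that the hypotheses of Theorem~\ref{thm:graph-gscat} hold for $G_1$, $G_2$, and also for the join $G_1\join G_2$. Once these are verified, every $\arb$ in the two-sided estimate of Theorem~\ref{thm:join} can be replaced by $\gscat$.

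\emph{Hypotheses.} Each $G_i$ is connected and contains an edge by assumption. In particular $n_i\geq 2\geq 1$, so Theorem~\ref{thm:join} applies with the same quantity $B=\lceil n_1n_2/(n_1+n_2-1)\rceil-1$. For the join, we need it to be connected and to have an edge.
\begin{itemize}
\item It contains an edge, since $E_1\neq\varnothing$ already lies in $E(G_1\join G_2)$; alternatively, any cross edge $uv$ with $u\in V_1$, $v\in V_2$ works.
\item It is connected, because any two vertices are joined by a path of length at most $2$ through the cross edges. If $u\in V_1$ and $v\in V_2$, then $uv$ is an edge. If $u,u'\in V_1$, pick any $w\in V_2$ (possible since $V_2\neq\varnothing$); then $u\,w\,u'$ is a path. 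The case $u,u'\in V_2$ is symmetric.
\end{itemize}
Connectivity of the join holds even without connectivity of the factors, but the factors must be connected to identify $\gscat(G_i)$ with $\arb(G_i)$.

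\emph{Conclusion.} Theorem~\ref{thm:graph-gscat} gives $\gscat(G_i)=\arb(G_i)$ for $i=1,2$ and $\gscat(G_1\join G_2)=\arb(G_1\join G_2)$. Substituting these equalities into the inequalities of Theorem~\ref{thm:join} yields
$$\max\{\gscat(G_1),\gscat(G_2),B\}\leq\gscat(G_1\join G_2)\leq\gscat(G_1)+\gscat(G_2)+B+1.$$

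There is no real obstacle. The only point needing care is verifying that the join satisfies the hypotheses of Theorem~\ref{thm:graph-gscat}, and this is settled by the two-step path argument above.
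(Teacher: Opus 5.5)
Your proof is correct and takes the same route as the paper: substitute the equalities of Theorem~\ref{thm:graph-gscat} into the two-sided estimate of Theorem~\ref{thm:join}. Your explicit check that $G_1\join G_2$ is connected and has an edge supplies a hypothesis of Theorem~\ref{thm:graph-gscat} that the paper's one-line proof leaves implicit.
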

\begin{proof}
This follows immediately from Theorems~\ref{thm:join}
and~\ref{thm:graph-gscat}.    
\end{proof}

\subsubsection{Examples and computations}

\begin{example}
\label{ex:independent-join}
Let $G=\overline K_m\join\overline K_n=K_{m,n}$, where $m,n\geq1$ (see Figure~\ref{fig:independent-join}). Since $E(\overline K_m)=E(\overline K_n)=\varnothing$, we have $\arb(\overline K_m)=\arb(\overline K_n)=0$. Setting $B=\left\lceil\frac{mn}{m+n-1}\right\rceil-1$, Theorem~\ref{thm:join} gives $B\leq\arb(K_{m,n})\leq B$. Therefore, $\arb(K_{m,n})=\left\lceil\frac{mn}{m+n-1}\right\rceil-1$. Since $K_{m,n}$ is connected, Corollary~\ref{cor:join-gscat} also gives $\gscat(K_{m,n})=\left\lceil\frac{mn}{m+n-1}\right\rceil-1$.
\end{example}

\begin{figure}[H]
    \centering
    \includegraphics[width=0.4\textwidth]{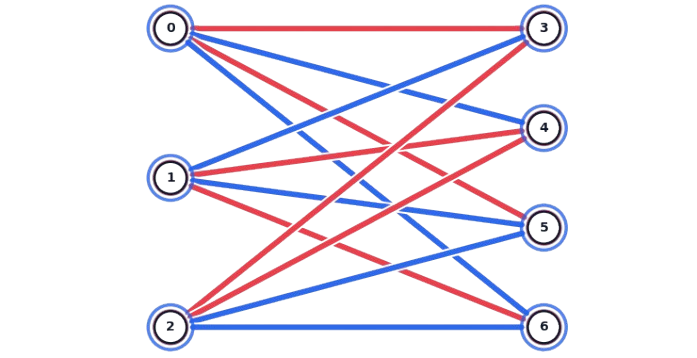}
    \caption{The complete bipartite graph $K_{3,4}$ with $\gscat(K_{3,4}) = 1$ and an explicit decomposition into two spanning trees, shown in red and blue.}
    \label{fig:independent-join}
\end{figure}

\begin{example}
\label{ex:fan}
Let $F_n=K_1\join P_n$, where $P_n$ is the path on $n\geq2$ vertices (see Figure~\ref{fig:fan}). Since $\arb(K_1)=\arb(P_n)=0$ and $B=\left\lceil\frac{n}{n}\right\rceil-1=0$, Theorem~\ref{thm:join} gives $0\leq\arb(F_n)\leq1$. Moreover, $F_n$ contains a triangle formed by the cone vertex and any edge of $P_n$, so $F_n$ is not a forest. Therefore, $\arb(F_n)=1$. Since $F_n$ is connected, Corollary~\ref{cor:join-gscat} also gives $\gscat(F_n)=1$.
\end{example}

\begin{figure}[H]
    \centering
    \includegraphics[width=0.3\textwidth]{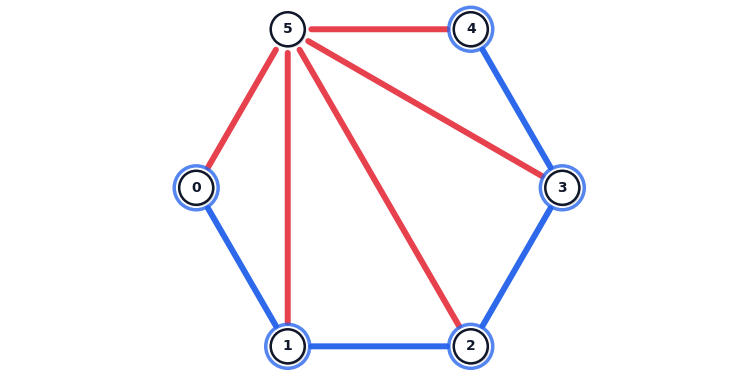}
    \caption{The fan graph $F_5=K_1\join P_5$ with $\gscat(F_5)=1$ and an explicit decomposition into two trees, shown in red and blue.}
    \label{fig:fan}
\end{figure}

\begin{example}
\label{ex:path-independent}
Let $G=P_4\join\overline K_3$ (see Figure~\ref{fig:path-independent}). Since $\arb(P_4)=\arb(\overline K_3)=0$ and $B=\left\lceil\frac{4\cdot3}{4+3-1}\right\rceil-1=1$, Theorem~\ref{thm:join} gives $1\leq\arb(G)\leq2$. Moreover, $G$ has seven vertices and $|E(G)|=|E(P_4)|+4\cdot3=3+12=15$. Applying Theorem~\ref{thm:nw} to the full vertex set gives $\arb(G)\geq\left\lceil\frac{15}{7-1}\right\rceil-1=2$. Therefore, $\arb(P_4\join\overline K_3)=2$. Since $G$ is connected, Corollary~\ref{cor:join-gscat} also gives $\gscat(P_4\join\overline K_3)=2$.
\end{example}

\begin{figure}[H]
    \centering
    \includegraphics[width=0.4\textwidth]{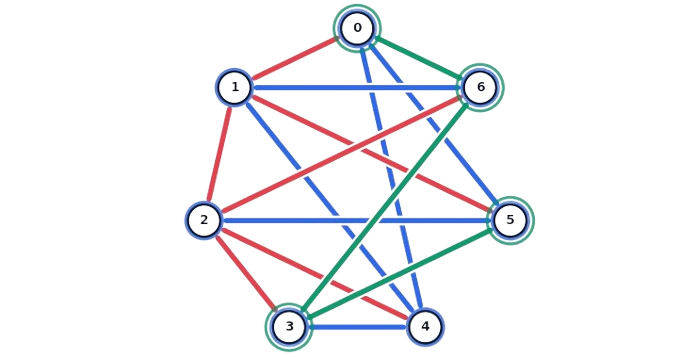}
    \caption{The graph $P_4\join\overline K_3$ with $\gscat(P_4 \join\overline K_3) = 2$ and an explicit decomposition into three trees, shown in red, blue, and green.}
    \label{fig:path-independent}
\end{figure}

\begin{example}
\label{ex:wheel}
Let $W_n=K_1\join C_{n-1}$, where $n\geq4$ (see Figure~\ref{fig:wheel}). Since $\arb(K_1)=0$, $\arb(C_{n-1})=1$, and $B=\left\lceil\frac{n-1}{n-1}\right\rceil-1=0$, Theorem~\ref{thm:join} gives $1\leq\arb(W_n)\leq2$. The upper bound can be sharpened directly. Label the rim vertices by $v_1,\ldots,v_{n-1}$ and the hub by $h$. One forest consists of the rim path $v_1v_2,v_2v_3,\ldots,v_{n-2}v_{n-1}$ together with the spoke $hv_1$, while the second forest consists of the remaining rim edge $v_{n-1}v_1$ and the spokes $hv_2,\ldots,hv_{n-1}$. These two spanning trees contain every edge of $W_n$, so $\arb(W_n)\leq1$. Therefore, $\arb(W_n)=1$. Since $W_n$ is connected, Corollary~\ref{cor:join-gscat} also gives $\gscat(W_n)=1$.
\end{example}

\begin{figure}[H]
    \centering
    \includegraphics[width=0.5\textwidth]{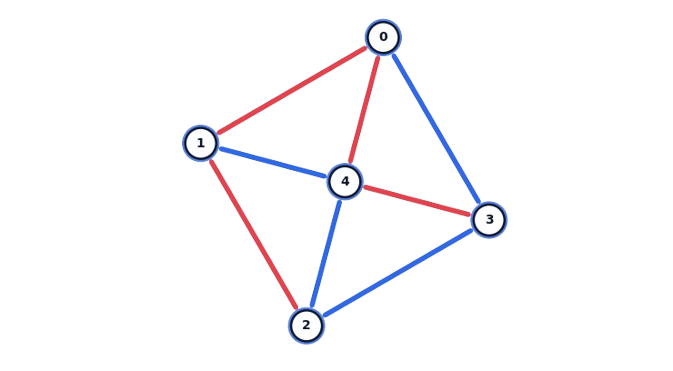}
    \caption{The wheel graph $W_5$ with $\gscat(W_5)=1$ and an explicit decomposition into two spanning trees, shown in red and blue.}
    \label{fig:wheel}
\end{figure}

\section{Conclusion}

We established new results on the arboricity of wedges and joins of graphs and applied them to derive corresponding results for the simplicial geometric category of connected graphs. In particular, we obtained a wedge formula for simplicial geometric category and general estimates for graph joins. These results further illustrate the close relationship between graph-theoretic decompositions and combinatorial topological invariants and provide a useful framework for studying simplicial geometric category through arboricity.

\section*{Acknowledgements}

The work was partially supported by the grant No. AP25796111 of the Science Committee of the Ministry of Science and Higher Education of the Republic of Kazakhstan.

\bigskip

\noindent
\textsuperscript{1,2}
\textsc{SDU University, Kaskelen, Kazakhstan.}

\smallskip

\noindent
\textsuperscript{1}\textit{Email address:}
\texttt{nursultan.kuanyshov@sdu.edu.kz}

\smallskip

\noindent
\textsuperscript{2}\textit{Email address:}
\texttt{islam.yeginbay@gmail.com}

\end{document}